\newtheorem{theorem}{Theorem}
\newtheorem{claim}{Claim}
\begin{document}
\title{On Growth of Double Cosets in Hyperbolic  Groups}
\author{Rita Gitik}
\address{ Department of Mathematics \\ University of Michigan \\ Ann Arbor, MI, 48109}
\email{ritagtk@umich.edu}
\author{Eliyahu Rips}
\address{ Institute of Mathematics \\ Hebrew University, Jerusalem, 91904, Israel}
\email{eliyahu.rips@mail.huji.ac.il} 

\date{\today}

\begin{abstract}
Let $H$ be a hyperbolic group, $A$ and $B$ be subgroups of $H$, and $gr(H,A,B)$ be the growth function of the double cosets $AhB, h \in H$. We prove that the behavior of $gr(H,A,B)$ splits into two different cases. If $A$ and $B$ are not quasiconvex, we obtain that every growth function of a finitely presented group can appear as $gr(H,A,B)$. We can even take $A=B$. In contrast, for quasiconvex subgroups A and B of infinite index, $gr(H,A,B)$ is exponential. Moreover, there exists a constant $\lambda > 0$, such that $gr(H,A,B)(r) > \lambda  f_H(r)$ for all big enough $r$, where $f_H(r)$ is the growth function of the group $H$. So, we have a clear dychotomy between the quasiconvex and non-quasiconvex case.
\end{abstract}

\subjclass[2010]{Primary: 20F67; Secondary: 20F65, 20B07}

\maketitle

\section{Introduction}

Growth of groups has been a subject of research for many years. For main results and references see \cite{Ha}. Growth of cosets and related subsets in groups has also been investigated, see for example \cite{Ol} and \cite{C-K}. However, de la Harp wrote in \cite{Ha}, p.209 that growth of double cosets in groups has not yet received much attention, but probably should. In this paper we investigate growth of double cosets of hyperbolic groups.
 
Let $H$ be a hyperbolic group and let $A$ and $B$ be finitely generated subgroups of $H$.
Fix some set of generators of $H$. For any $k \ge 0$,  let $gr(H,A,B)(r)$ be the growth function for double cosets $AhB$, that is 
$gr(H,A,B)(r)=| \{ AhB, |h| \le r \}|$, where $|h|$ is the length of $h$.

Our first theorem shows that the class of growth rate functions of double cosets of non-quasiconvex subgroups is rather wide. Namely, let $G=\langle x_1, \cdots , x_m|r_1, \cdots , r_n \rangle$ be any finitely presented group. Let $f_G$ be the growth function of $G$, that is $f_G(r)=|\{g \in G, |g| \le r \}|$, where the length $|g|$ is taken with respect to the set of generators
 $x_1, \cdots, x_m$.

\begin{theorem}
There exists a hyperbolic group $H$ and a finitely generated subgroup $N$ of $H$ such that $gr(H,N,N)=f_G$.
\end{theorem}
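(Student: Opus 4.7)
The plan is to arrange that $N$ is a normal subgroup of $H$ with $H/N\cong G$; then every double coset $NhN$ coincides with the single coset $hN$, and the double-coset count reduces to a count of elements of $G$. The natural tool is the Rips construction: starting from the given presentation of $G$, one adjoins two auxiliary generators $a_1,a_2$ and chooses a further finite list of defining relators, each built from the $r_k$ and the $a_j^{\pm 1}$, so as to (a) produce a $C'(1/6)$ small cancellation presentation, whence $H$ is hyperbolic, and (b) force $N:=\langle a_1,a_2\rangle_H$ to be normal with quotient map $\pi\colon H\to G$ sending $x_i\mapsto x_i$ and $a_j\mapsto 1$. The subgroup $N$ is finitely generated by construction.

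I then use the generating set $\{x_1,\dots,x_m,a_1,a_2\}$ for $H$ and the given $\{x_1,\dots,x_m\}$ for $G$. Since $N$ is normal in $H$, $NhN=hN$, so the set of double cosets is in bijection with $G$ via $hN\mapsto\pi(h)$. The theorem thereby reduces to the claim that $\pi^{-1}(g)$ meets the $r$-ball of $H$ if and only if $|g|_G\le r$, i.e.\ $\min\{|h|_H:h\in\pi^{-1}(g)\}=|g|_G$. The inequality $\min\le|g|_G$ is immediate: any $G$-geodesic word for $g$, read as an $H$-word in the same letters, represents a lift of $g$ of length $|g|_G$. The reverse is equally direct: if $y_1\cdots y_s$ is a geodesic $H$-word for $h$, deleting its $a_j^{\pm 1}$ letters yields an $x$-word of length at most $s$ representing $\pi(h)=g$, so $|g|_G\le |h|_H$. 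Combining these produces the bijection $\{NhN:|h|_H\le r\}\leftrightarrow\{g\in G:|g|_G\le r\}$ and gives $gr(H,N,N)(r)=f_G(r)$.

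The chief technical step is invoking the Rips construction in a form that simultaneously delivers hyperbolicity, normality, finite generation of the kernel, and — crucially — a projection $\pi$ that restricts to the identity on the chosen generators of $G$; the length bookkeeping above depends on $\pi$ neither lengthening nor (modulo the $a_j$-letters) shortening any word. Once this is in place, the double-coset count and the length matching are formal.
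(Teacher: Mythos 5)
Your proposal is correct and follows essentially the same route as the paper: both invoke the Rips construction to produce a hyperbolic $H$ with finitely generated normal $N$ and quotient $G$, identify double cosets $NhN$ with elements of $G$, and match lengths by noting that the quotient map fixes the $x_i$ and kills the auxiliary generators, so minimal coset representatives have length exactly $|g|_G$. Your write-up of the two length inequalities is in fact slightly more explicit than the paper's.
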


Our second theorem shows that if $A$ and $B$ are quasiconvex then the growth function of the double cosets $gr(H,A,B)$ is exponential.

\begin{theorem}
Let $H$ be a non-elementary hyperbolic group with a fixed set of generators.
Let $A$ and $B$ be quasiconvex subgroups of $H$ of infinite index.  Then there exists a constant 
$\lambda >0$ such that $gr(H,A,B)(r) \ge \lambda f_H(r)$ for all big enough $r$.
\end{theorem}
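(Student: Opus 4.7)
The plan is to count double cosets by counting their shortest representatives. Let $R\subset H$ be the set of elements $h$ that are shortest in their own double coset $AhB$, and let $B_r=\{h\in H:|h|\le r\}$; then $gr(H,A,B)(r)=|R\cap B_r|$.

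The main geometric input is the following lemma: there exists a constant $c$, depending only on the quasiconvexity constants of $A,B$ and the hyperbolicity constant of $H$, such that for every $h\in R$ and every $a\in A$, $b\in B$,
$$|ahb|\ \ge\ |a|+|h|+|b|-c.$$
I would prove this by applying the thin-quadrilateral property of $\delta$-hyperbolic spaces to the four-vertex configuration $1,a,ah,ahb$: if $|ahb|$ were much less than $|a|+|h|+|b|$, the geodesic from $1$ to $ahb$ would fellow-travel the $A$-orbit near $a$ and the $ahB$-orbit near $ah$, and projecting through the quasiconvex sets $A$ and $hB$ would produce $a'\in A$ and $b'\in B$ with $|a'hb'|<|h|$, contradicting the choice of $h$. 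This lemma is the main obstacle of the proof.

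Given the lemma, for $h\in R$ with $|h|=k$, any $ahb\in B_r$ satisfies $|a|+|b|\le r-k+c$. Since $A,B$ are quasiconvex of infinite index in the non-elementary hyperbolic group $H$, their exponential growth rates $\delta_A,\delta_B$ are strictly less than $\delta_H$ (the known strict inequality for quasiconvex subgroups of infinite index). Counting pairs $(a,b)\in A\times B$ with $|a|+|b|\le r-k+c$ then yields
$$|AhB\cap B_r|\ \le\ C\cdot P(r-k)\cdot e^{\delta^*(r-k)}$$
for some polynomial $P$ and uniform constant $C$, where $\delta^*:=\max(\delta_A,\delta_B)<\delta_H$.

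Finally, split
$$f_H(r)\ =\ \sum_{h\in R\cap B_r}|AhB\cap B_r|\ =\ \sum_{|h|\ge r-K}|AhB\cap B_r|\ +\ \sum_{|h|<r-K}|AhB\cap B_r|$$
at a cutoff $K$ to be chosen. Using the crude bound $|R\cap\{|h|=k\}|\le f_H(k)\asymp e^{\delta_H k}$, the deep sum is at most a constant multiple of $f_H(r)\cdot\sum_{j>K}P(j)e^{-(\delta_H-\delta^*)j}$, and this tail tends to $0$ as $K\to\infty$. Choose $K$ to be a large but fixed constant so that the deep contribution is at most $\tfrac12 f_H(r)$; then the shallow contribution is at least $\tfrac12 f_H(r)$. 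But each shallow term is bounded by $C_K:=C\cdot P(K)e^{\delta^*K}$, a constant independent of $r$, so the number of shallow representatives is at least $\tfrac{1}{2C_K}f_H(r)$, giving $gr(H,A,B)(r)\ge\lambda f_H(r)$ with $\lambda=1/(2C_K)>0$.
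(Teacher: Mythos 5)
Your counting framework (split $f_H(r)=\sum_{h}|AhB\cap B_r|$ over shortest representatives, kill the deep terms using a growth gap, conclude there are many shallow representatives) is an attractive and genuinely different route from the paper, which instead builds explicit quasigeodesic representatives $c^Nztwd^N$ via a combination theorem and ping-pong. But the key lemma on which everything rests is false as stated. Take $A=B$ an infinite quasiconvex subgroup of infinite index. The double coset $A\cdot 1\cdot B=A$ has shortest representative $h=1\in R$, and for any $a\in A$ with $b=a^{-1}\in B$ one gets $|ahb|=0$ while $|a|+|h|+|b|=2|a|$ is unbounded; no constant $c$ exists. The same failure occurs whenever $K_h:=A\cap hBh^{-1}$ is infinite, since the representations of a fixed $g\in AhB$ form a $K_h$-orbit $(ak,\,h^{-1}k^{-1}hb)$ on which $|a|+|b|$ is unbounded. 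Your proposed quadrilateral argument silently assumes the non-degenerate configuration: when $[1,a]$ and $ah[1,b]$ fold back along each other, no element of $AhB$ shorter than $h$ is produced, so there is no contradiction to extract.

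What your argument actually needs is only the existential version: every $g\in AhB\cap B_r$ admits \emph{some} representation with $|a|+|b|\le r-|h|+c$, with $c$ uniform over all shortest representatives $h$. I believe this is true, but it is the real content of the lemma and your sketch does not reach it. The thin-quadrilateral argument does yield uniform bounds on the two corner Gromov products $(1,ah)_a$ and $(a,g)_{ah}$ (that part is fine and uses exactly the minimality of $h$ plus quasiconvexity), but to convert that into the additive inequality you must (i) invoke a broken-geodesic lemma, which requires $|h|$ to dominate those Gromov products and so leaves the short-$h$ regime untreated, and (ii) in that regime, choose the representative optimally within its $K_h$-orbit and control the cancellation between $A$ and $hB$ via the standard ``long fellow-traveling of two quasiconvex cosets forces an infinite intersection'' lemma, with constants uniform in $h$. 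None of this is in the proposal. Two further points: the growth-gap inequality $\max(\delta_A,\delta_B)<\delta_H$ for infinite-index quasiconvex subgroups is a correct but deep external input (Coulon--Dal'bo--Sambusetti) doing substantial work here, and should be cited as such together with Coornaert's purely exponential growth estimate $f_H(r)\asymp e^{\delta_H r}$; and your set $R$ should contain one chosen shortest representative per double coset, since otherwise $gr(H,A,B)(r)=|R\cap B_r|$ is only an inequality (this is harmless once the lemma is in place, because it also bounds the number of shortest representatives per coset, but as written it is another small gap).
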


\section{Proof of The First Theorem}

Let $G=\langle x_1, \cdots , x_m|r_1, \cdots , r_n \rangle$ be a finitely presented group.
By a well-known construction of the second author \cite{Rips}, there exists a hyperbolic group $H$ and a short exact sequence
$1 \longrightarrow N \overset{\alpha}{\longrightarrow}   H   \overset{\beta}{\longrightarrow}   G \longrightarrow 1$ such that the normal subgroup $N$ of $H$ is finitely generated
as a group. Indeed, following \cite{Rips}, define $H$ to be a group generated by the elements $x_1, \cdots , x_m, t_1, t_2$ with defining relations

$r_it_1t_2^{a_i}t_1t_2^{a_i +1} \cdots t_1t_2^{b_i} \; \;(i=1,2, \cdots ,n)$,

$x_i^{-1}t_jx_it_1t_2^{c_{ij}}t_1t_2^{c_{ij+1}} \cdots t_1t_2^{d_{ij}} \; \; (i=1,2,\cdots ,m, j=1,2)$,

$x_it_jx_i^{-1}t_1t_2^{e_{ij}}t_1t_2^{e_{ij+1}} \cdots t_1t_2^{f_{ij}} \; \; (i=1,2, \cdots ,m,j=1,2)$.

We can choose the constants $a_i,b_i,c_{ij},d_{ij},e_{ij},f_{ij}$ such that $H$ is a small cancellation group with arbitrarily small constant,
and hence $H$ is a hyperbolic group.

Let us show that $gr(H,N,N)=f_G$.

For every double coset $NhN$ consider the element $\beta(h) \in G$. This is a $1$-to-$1$ correspondence between double cosets and elements of $G$. Indeed, $Nh_1N=Nh_2N$ implies $\beta(h_1)=\beta(h_2)$. Also for every 
$g=x_{i_1}^{\epsilon_1}x_{i_2}^{\epsilon_2} \cdots x_{i_s}^{\epsilon_s} \in G$,  $\beta(g)=g$, so to $NgN$ corresponds the element $\beta(g)=g \in G$.

The homomorphism $\beta:H \longrightarrow G$ sends $x_i$ to $x_i$ and $t_j$ to $1$ for $i=1,2, \cdots ,m, j=1,2$. It follows that for every $h \in H$,
$|\beta(h)| \le |h|$, where the length of $h$ is taken with respect to the generators $x_1, \cdots, x_m, t_1,t_2$ of $H$ and the length of $\beta(h)$ is taken with respect to the generators $x_1, \cdots ,x_m$ of $G$. 

Therefore, for any $k \ge 0$, the number of double cosets $NhN$ with $|h| \le k$ is equal to the number of elements $g \in G$ with $|g| \le k$. Hence,
$gr(H,N,N)=f_G$, proving Theorem 1.

\section{Proof of The Second Theorem}

\begin{claim}
Let $A$ be an infinite index quasiconvex subgroup of a hyperbolic group $H$. There exists an element $c_1 \in H$ of infinite order such that 
$A \cap \langle c_1 \rangle = \{ 1 \}$.
\end{claim}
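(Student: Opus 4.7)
The plan is to find a loxodromic element $c_1 \in H$ whose two fixed points on the boundary $\partial H$ lie outside the limit set $\Lambda(A)$; no nontrivial power of $c_1$ can then lie in $A$, because such a power is loxodromic with the same fixed points, and the fixed points of any infinite order element of $A$ lie in $\Lambda(A)$.

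First I would dispose of the elementary case. The infinite index assumption forces $H$ to be infinite, so if $H$ is elementary then $H$ is virtually infinite cyclic and $A$ is finite, in which case any infinite order $c_1$ works since $A \cap \langle c_1 \rangle$ is a finite subgroup of $\mathbb{Z}$, hence trivial.

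For the main case, assume $H$ is non-elementary. I would invoke two standard facts about the action of $H$ on $\partial H$. First, since $A$ is quasiconvex of infinite index, its limit set $\Lambda(A)$ is a proper closed subset of $\partial H$; this is the one place where quasiconvexity is essential, and I would cite it from the literature on quasiconvex subgroups rather than reprove it. Second, the pairs of fixed points of loxodromic elements are dense in $(\partial H \times \partial H) \setminus \Delta$, where $\Delta$ denotes the diagonal. Combining these, the nonempty open set $((\partial H \setminus \Lambda(A)) \times (\partial H \setminus \Lambda(A))) \setminus \Delta$ must contain the fixed-point pair of some loxodromic element $c_1 \in H$.

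The final step is routine: $A \cap \langle c_1 \rangle$ is a subgroup of $\langle c_1 \rangle \cong \mathbb{Z}$, hence either trivial or of the form $\langle c_1^k \rangle$ with $k \geq 1$; in the latter case, $c_1^k \in A$ is loxodromic with the same fixed points as $c_1$, placing those fixed points in $\Lambda(A)$ and contradicting the choice of $c_1$. The main obstacle is the input that $\Lambda(A) \subsetneq \partial H$ for quasiconvex subgroups of infinite index; an alternative, more self-contained route would be to construct $c_1$ directly by a ping-pong argument inside a free non-abelian subgroup of $H$ transverse to $A$, but this seems unnecessarily long.
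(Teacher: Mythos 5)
Your proposal is correct and follows essentially the same route as the paper: both arguments combine the properness of the closed limit set $\Lambda(A)$ (which the paper derives from $A$ having finite index in $\mathrm{Stab}_H(\Lambda(A))$) with the density of fixed points of infinite-order elements in $\partial H$ to produce a $c_1$ whose fixed point(s) miss $\Lambda(A)$, and then conclude as you do. The only cosmetic differences are that the paper works with a single pole $c_{1+}$ rather than the pair of fixed points (so it needs only density of poles in $\partial H$, not of pairs in $(\partial H \times \partial H)\setminus\Delta$, and avoids splitting off the elementary case).
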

\begin{proof}
Denote the boundary of $H$ by $\partial H$.
Note that the limit set $\Lambda(A)$ of $A$ in $\partial H$ is a closed subset. As $A$ is infinite and quasiconvex, it has finite index in the setwise stabilizer $Stab_H(\Lambda(A))$. In particular, this implies that $\Lambda(A) \neq \partial H$ because if $\Lambda(A) = \partial H$ then 
$Stab_H(\Lambda(A))=H$, contradicting the assumption that $A$ has infinite index in $H$. Hence
$\Omega = \partial H - \Lambda(A)$ is a nonempty open subset of $\partial H$.

Since $H$ is hyperbolic, the set of poles $t_+$ of all infinite order elements $t \in H$ is dense in
$\partial H$. Hence there exists an element $c_1 \in H$ of infinite order such that $c_{1^+} \in \Omega$. Therefore  $A \cap \langle c_1 \rangle = \{ 1 \}$, since otherwise $c_1^n \in A$ for some $n >0$ and $c_{1^+} \in \Lambda(A)$, contradicting the assumption that $c_{1^+} \in \Omega$.
\end{proof}

The only facts used in the proof of Claim 1 are:

\begin{enumerate}
\item Poles of infinite order elements are dense in the boundary of the group $H$.

\item If $A$ is an infinite quasiconvex subgroup of $H$ then $A$ has finite index in  $Stab_H(\Lambda(A))$.
\end{enumerate}

The aforementioned facts hold for an arbitrary hyperbolic group $H$, without any restrictions on the existence of nontrivial torsion elements.

\bigskip

\begin{claim}
Let $H$ be a non-elementary hyperbolic group, let $A$ be a quasiconvex subgroup of infinite index in $H$, and let $c_1 \in H$ be an element of infinite order such that $A \cap \langle c_1 \rangle = \{1 \}$. Denote by $C_1$ the commensurator of $\langle c_1 \rangle$ in $H$ and let $C_2 = A \cap C_1$. 
There exists a large integer $M$ such that for  $C= \langle c_1^M, C_2 \rangle$ the following holds:
$ \langle A, C \rangle = A \underset{C_2}* C$ and $ \langle A, C \rangle$ is a quasiconvex subgroup of $H$.
\end{claim}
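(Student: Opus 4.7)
The plan is, first, to pin down the algebraic structure of $C$ and its intersection with $A$ using the fact that in a hyperbolic group the commensurator of an infinite-order cyclic subgroup is virtually cyclic; and second, to apply a ping-pong/combination theorem for quasiconvex subgroups of hyperbolic groups, with $M$ chosen large enough to provide the required geometric separation.

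The first step is to show that $C_2$ is finite. Since $c_1$ has infinite order in the hyperbolic group $H$, the commensurator $C_1$ is virtually cyclic, with $\langle c_1 \rangle$ sitting inside it as a finite-index subgroup. Because $C_2 = A \cap C_1 \subset A$, the hypothesis $A \cap \langle c_1 \rangle = \{1\}$ gives $C_2 \cap \langle c_1 \rangle = \{1\}$, so $C_2$ contains no infinite-order element: any such element of $C_1$ would have a nontrivial power in $\langle c_1 \rangle$. A torsion subgroup of a virtually cyclic group is finite, hence $C_2$ is finite.

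Next I would choose $M$. The intersection of the conjugates of $\langle c_1 \rangle$ under $C_1$ is a finite-index normal cyclic subgroup $\langle c_1^{M_0} \rangle$ of $C_1$, and conjugation by any element of $C_1$ acts on this $\mathbb{Z}$ by $\pm 1$. Hence for any multiple $M$ of $M_0$, $\langle c_1^M \rangle$ is normal in $C_1$, and in particular in $C$. Since $\langle c_1^M \rangle \cap C_2 \subset A \cap \langle c_1 \rangle = \{1\}$, this yields $C = \langle c_1^M \rangle \rtimes C_2$. In particular $C$ is virtually infinite cyclic, generated by the loxodromic element $c_1^M$, and therefore quasiconvex in $H$. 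The intersection is then immediate: $C \subset C_1$, so $A \cap C = (A \cap C_1) \cap C = C_2 \cap C = C_2$.

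With $A$ and $C$ both quasiconvex in $H$ and $A \cap C = C_2$, the final step is to invoke a ping-pong/combination theorem for quasiconvex subgroups of hyperbolic groups (of the kind proved by Gitik) to conclude simultaneously that $\langle A, C \rangle \cong A *_{C_2} C$ and that $\langle A, C \rangle$ is quasiconvex in $H$. The geometric input required is a uniform constant $D$ such that an axis of $c_1$ meets any bounded neighborhood of $A$ only in a segment of length at most $D$; this should follow from quasiconvexity of $A$ together with $A \cap \langle c_1 \rangle = \{1\}$, which forces the axis to escape the quasiconvex hull of $A$ beyond a bounded region. Once $D$ is in hand, taking $M$ (a multiple of $M_0$) so that the translation length of $c_1^M$ exceeds $D$ together with the relevant hyperbolicity and quasiconvexity constants makes every reduced normal-form word $a_0 c_1^{M k_1} a_1 \cdots c_1^{M k_r} a_r$ trace a quasi-geodesic in $H$, yielding both the amalgamated product structure and quasiconvexity of $\langle A, C \rangle$. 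The main obstacle is precisely this quantitative geometric step: extracting the fellow-traveling bound $D$ and calibrating $M$ against the standard combination machinery. The algebraic steps are essentially bookkeeping, but verifying the ping-pong hypotheses in the hyperbolic setting is where the real content of the claim lives.
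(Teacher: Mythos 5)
Your argument is correct in substance, but it takes a more hands-on route than the paper, whose entire proof of this claim is a citation: it invokes Theorem 1.1 of Martinez-Pedroza's combination theorem for relatively quasiconvex subgroups, together with Bowditch's observation that a non-elementary hyperbolic group is hyperbolic relative to the commensurator $C_1$ of an infinite cyclic subgroup. In that framework $A$ is relatively quasiconvex, $C$ is a subgroup of the peripheral subgroup $C_1$ containing $A \cap C_1 = C_2$, and choosing $M$ large makes $C$ ``sufficiently deep'' (the elements of $C \setminus C_2$ avoid a prescribed finite subset of $C_1$), which is exactly the hypothesis of that theorem; the conclusion is the amalgam $A \underset{C_2}{*} C$ and relative quasiconvexity, which here upgrades to ordinary quasiconvexity since $C_1$ is virtually cyclic. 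You instead stay in the absolute hyperbolic setting: you identify $C_1$ as virtually cyclic, deduce that $C_2$ is finite, exhibit $C = \langle c_1^M\rangle \rtimes C_2$ as an elementary (hence quasiconvex) subgroup with $A \cap C = C_2$, and then appeal to the non-relative combination theorem (Gitik's ping-pong, which is precisely the result the paper notes is generalized by Martinez-Pedroza's). Those structural steps are all correct and are not wasted effort --- the paper silently relies on the finiteness of $C_2$ and $D_2$ later, when it divides by $|C_2|\cdot|D_2|$. The one point where your sketch still owes an argument is the fellow-traveling bound $D$: the bounded overlap of the axis of $c_1$ with a neighborhood of $A$ does follow from $A \cap \langle c_1 \rangle = \{1\}$, via the standard fact that for quasiconvex subgroups $\Lambda(A) \cap \Lambda(\langle c_1\rangle) = \Lambda(A \cap \langle c_1 \rangle) = \emptyset$, but this needs to be stated (or absorbed into the combination theorem you cite as a black box, which is what the paper does). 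The trade-off: your version is more self-contained and explicit; the paper's relative formulation is shorter and is reused later in the proof of Theorem 2, where Theorem 1.1 of Martinez-Pedroza is also what makes $c^{-N}ac^{N}$ a quasigeodesic for $a \notin C_2$.
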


Claim 2 follows from Theorem 1.1 in \cite{MP}, which generalizes Theorem 2 in \cite{Gi1}, because any non-elementary hyperbolic group $G$ is hyperbolic relative to the commensurator of an infinite cyclic subgroup. That was observed in Theorem 7.11 in \cite{Bo}.

\bigskip

\begin{claim}
Let $H$ be a non-elementary hyperbolic group with a fixed generating set and let $(\epsilon_0, \eta_0)$ be constants.
There exist geodesic words $x$ and $y$, and constants $(\epsilon, \eta)$ such that for any 
$(\epsilon_0, \eta_0)$-quasigeodesic words $u$ and $v$ in $H$ at least one of the words $\{ uxv, ux^{-1}v, uyv, uy^{-1}v \}$
is an $(\epsilon, \eta)$-quasigeodesic.
\end{claim}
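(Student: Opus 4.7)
The plan is to construct $x$ and $y$ from two ``independent'' loxodromic elements of $H$ whose four limit points on $\partial H$ are pairwise distinct, and then finish by a pigeonhole over the four candidate insertions. Since $H$ is non-elementary, one can pick infinite-order elements $a, b \in H$ whose fixed-point pairs $\{a^+, a^-\}$ and $\{b^+, b^-\}$ are disjoint; this is standard, via density of poles of loxodromic elements in $\partial H$. Set $x$ and $y$ to be fixed geodesic words representing $a^M$ and $b^M$ for a large integer $M$ to be specified at the end. The geodesic representative of $a^M$ fellow-travels the axis of $a$ outside a bounded portion near each end, so for $M$ large the initial directions from $1$ of the four words $x, x^{-1}, y, y^{-1}$, and symmetrically their final directions, lie in prescribed arbitrarily small, pairwise disjoint neighborhoods on $\partial H$ of $a^+, a^-, b^+, b^-$.

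Next I would invoke the standard concatenation principle in $\delta$-hyperbolic spaces: there exist a threshold $K = K(\epsilon_0, \eta_0, \delta)$ and output constants $(\epsilon, \eta) = (\epsilon, \eta)(\epsilon_0, \eta_0, \delta, K)$ such that whenever $w_1, w_2$ are $(\epsilon_0, \eta_0)$-quasigeodesics meeting at an endpoint $p$ with Gromov product at $p$ between the start of $w_1$ and the end of $w_2$ at most $K$, the concatenation $w_1 w_2$ is an $(\epsilon, \eta)$-quasigeodesic. Applied at each of the two junctions of a candidate $uwv$, with $w \in \{x, x^{-1}, y, y^{-1}\}$, and translated to the identity by left-invariance of the Cayley metric, this reduces Claim 3 to finding one $w$ in the set for which $(u^{-1} \,|\, w)_1 \le K$ and $(w^{-1} \,|\, v)_1 \le K$ hold simultaneously.

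The pigeonhole step then closes the argument. The Gromov product $(u^{-1} \,|\, w)_1$ is large exactly when the initial portions from $1$ of $u^{-1}$ and $w$ fellow-travel for a long time; by the first paragraph, for $M$ large the four initial directions of $x, x^{-1}, y, y^{-1}$ are pairwise separated in the Cayley graph at every relevant scale, so the single initial direction of $u^{-1}$ can be $K$-close to at most one of them. Symmetrically, the initial direction of $v$ can be $K$-close to at most one of the four initial directions of the $w^{-1}$. Hence at most one candidate fails the left condition and at most one fails the right, so at least two of the four candidates are $(\epsilon, \eta)$-quasigeodesics. The main technical obstacle is making the notion of ``direction'' of a finite quasigeodesic word precise enough for the pigeonhole; this is handled by carrying out the comparisons at an intermediate shadow scale $R$ chosen much larger than $K + \delta + \epsilon_0 + \eta_0$ and much smaller than $M \cdot \min(|a|, |b|)$, so that the four length-$R$ initial subsegments of the geodesics representing $x^{\pm M}, y^{\pm M}$ are pairwise far apart in the Cayley graph, beyond any quasigeodesic deviation from the axes of $a^{\pm}$ and $b^{\pm}$. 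With this hierarchy of constants in place the pigeonhole is immediate and $(\epsilon, \eta)$ produced by the concatenation principle is the pair advertised by the claim.
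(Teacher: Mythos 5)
Your proposal is correct and follows essentially the same route as the paper: choose $x$ and $y$ so that the four words $x^{\pm 1}, y^{\pm 1}$ have pairwise small Gromov products at the base point, observe that a given quasigeodesic $u$ (resp.\ $v^{-1}$) can have a large Gromov product with at most one of the four, and conclude by pigeonhole together with the standard concatenation criterion for quasigeodesics in a $\delta$-hyperbolic space. The only substantive difference is that you make the existence of such $x$ and $y$ explicit (as high powers of two independent loxodromic elements with disjoint fixed-point pairs), whereas the paper simply asserts that suitable long geodesic words exist.
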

\begin{proof}
As we are interested only in the existence of the constants $(\epsilon, \eta)$ we do not have to compute them explicitly.
Let $\delta$ be a hyperbolicity constant of $H$. Denote the Gromov product of the words $w$ and $z$ by $(w,z)_1$, cf. \cite{Gr} and \cite{B-K} definition 2.6. 

For a big enough constant $C_0$ there exist geodesic words $x$ and $y$ which are longer than $C_0$, such that $(w,z)_1 \le \beta$, where $\beta$ is a constant bigger than $\delta$, and the words $w$ and $z$ belong to the set $\{ x, x^{-1}, y, y^{-1} \}$.

Note that for any  $(\epsilon_0, \eta_0)$-quasigeodesic $u$ only one of the Gromov products $\{ (u,x)_1, (u,x^{-1})_1, (u,y)_1, (u,y^{-1})_1 \}$ might be big. Similarly, only one of the Gromov products $\{ (v^{-1},x)_1, (v^{-1},x^{-1})_1, (v^{-1},y)_1, (v^{-1},y^{-1})_1 \}$ might be big. Therefore, at least one of the following conditions holds.
\begin{enumerate}
\item $(u,x)_1$ and $(v^{-1}, x^{-1})_1$ are small;
\item $(u,x^{-1})_1$ and $(v^{-1}, x)_1$ are small;
\item $(u,y)_1$ and $(v^{-1}, y^{-1})_1$ are small;
\item $(u,y^{-1})_1$ and $(v^{-1}, y)_1$ are small.
\end{enumerate}

In case $(1)$ $uxv$ is a quasigeodesic, in case $(2)$ $ux^{-1}v$ is a quasigeodesic, in case $(3)$ $uyv$ is a quasigeodesic, in case $(4)$ $uy^{-1}v$ is a quasigeodesic. Note that the quasigeodesicity constants $(\epsilon, \eta)$ do not depend on $u$ and $v$.
\end{proof}

\bigskip      

\textbf{Proof of Theorem 2.}

Since $A$ and $B$ are quasiconvex and of infinite index in $H$, Claim 1 implies that there exist elements $c_1 \in H$ and $d_1 \in H$ of infinite order such that $ \langle c_1 \rangle \cap A = \{ 1 \}$ and $ \langle d_1 \rangle \cap B = \{ 1 \}$. 
Denote by $C_1$ the commensurator of $\langle c_1 \rangle $ in $H$ and by $D_1$ the commensurator of $\langle d_1 \rangle $ in $H$. Let $C_2=C_1 \cap A$ and $D_2=D_1 \cap B$. Claim 2 implies that there exists a large integer $M$ such that the subgroups $C= \langle c_1^M, C_2 \rangle $ and
$D= \langle d_1^M, D_2 \rangle$ have the following properties:
$ \langle A, C \rangle = A \underset{C_2} * C$ and $ \langle B, D \rangle = B \underset{D_2} * D$.
Denote $c=c_1^M$ and $d=d_1^M$.

According to Claim 3, there exists a large integer $N$ such that for a long enough geodesic word $t$ there exist  $z$ and $w$ in the set
$\{ x, x^{-1}, y, y^{-1} \}$ such that $s=c^Nztwd^N$ is $(\epsilon, \eta)$-quasigeodesic. Note that $|s| \le N|c| + N|d| + |z| +|w| +|t| \le N(|c| +|d|) + 2|x| + 2 |y| + |t|$. So if $|t| \le r -( N(|c| +|d|) + 2|x| + 2 |y| + |t|)$ then $|s| \le r$. As  $w$ and $z$ belong to the set $\{ x, x^{-1}, y, y^{-1} \}$, no more than $16$ different choices of $t$ can produce the same word $s$.

Let  $m = f_H(r -N(|c| +|d|) + 2|x| + 2 |y| - f_H(C_0))$. There exist geodesic words $t_1, t_2, \cdots , t_m$ such that $C_0 < |t_i| \le  r - N(|c| +|d| + 2|x| + 2 |y|)$ for $1 \le i \le m$. It follows that the corresponding words $s_i=c^Nz_it_iw_id^N$ are $(\epsilon, \eta)$-quasigeodesics and $z_i$ and $w_i$ belong to the set $\{ x, x^{-1}, y, y^{-1} \}$. As $f_H$ is the growth function of a non-elementary hyperbolic group, there exists a positive constant $\mu$ such that $16 f_H(r -(N(|c| +|d|) + 2|x| + 2 |y|)) \ge \mu f_H(r)$. 

Consider the double cosets $\{ As_iB| 1 \le i \le m \}$. If $As_iB=As_jB, i \neq j$,
then there exist geodesic words $a \in A$ and $b \in B$ such that $s_j=_H as_ib$ in $H$, hence $b=_H s_i^{-1}a^{-1}s_j=_H d^{-N}w_i^{-1}  t_i^{-1} z_i^{-1}c^{-N}a c^N z_jt_jw_j d^N$. 

If $ a \notin C_2$ then, according to Theorem 1.1 of \cite{MP}, the word $c^{-N}a^{-1}c^{N}$ is a quasigeodesic. As $d^{-N}w_i^{-1}t_i^{-1}z_i^{-1}c_0^{-N}$ and $c^{N}z_jt_jw_jd^N$ are quasigeodesics, it follows that
$d^{-N}w_i^{-1}t_i^{-1}z_i^{-1}c^{-N}a^{-1}c^N z_jt_jw_jd^N$ is also a quasigeodesic, contradicting the fact that it is equal to $b \in B$ in $H$, where the subgroup $B$ is quasiconvex. Similarly, it cannot happen that $b \notin D_2$. Hence $s_j=_H as_ib$ implies that $a \in C_2$ and $b \in D_2$.

Therefore at least $\frac{m}{|C_2| \cdot |D_2|}$ of the double cosets $As_iB, i=1, \cdots, m$ are different from each other. So
$gr(H,A,B)(r) \ge \frac{\mu}{|C_2| \cdot |D_2|} \cdot f_H(r)$ and $\lambda = \frac{\mu}{|C_2| \cdot |D_2|}$ satisfies the requirements of Theorem 2.

\bigskip

\textbf{Corollary to the Proof of Theorem 2.}

Using the notation of the Proof of Theorem 2, for each $i = 1, 2, ,..., m$ the subgroup 
$\langle A, s_i B s_i^{-1} \rangle $ is a quasiconvex subgroup of $H$, and its geodesic core (see  Definition $4$ in \cite{Gi2}) consists of a cylindrical neighborhood of the path for $s_i$ with geodesic cores of $A$ and of $B$ attached at the beginning and at the end of the path for $s_i$ respectively.

\section{Acknowledgment}
We would like to thank Eduardo Martinez-Pedroza for bringing to our attention his paper \cite{MP} and to Ilya Kapovich for helpful discussions.


\begin{thebibliography}{15}

\bibitem{B-K} N. Benakli and I. Kapovich,
\emph{Boundaries of Hyperbolic Groups},
Comtemporary Math., \textbf{296}(2002), 39-93,
AMS, Providence.

\bibitem{Bo} B. H. Bowditch,
\emph{Relatively Hyperbolic Groups},
IJAC, \textbf{22}(2012), 1250016.

\bibitem{C-K} M. Coornaet and G. Knieper,
\emph{Growth of Conjugacy Classes in Gromov Hyperbolic Groups},
GAFA, \textbf{12}(2002), 464-478.
 
\bibitem{Gi1} R. Gitik,
\emph{Ping-Pong on Negatively Curved Groups},
J. of Algebra, \textbf{217}(1999), 65-72.

\bibitem{Gi2} R. Gitik,
\emph{Tameness and Geodesic Cores of Subgroups},
J. Austral. Math. Soc. (Series A), \textbf{69}(2000), 153-161.

\bibitem{Gr} M. Gromov,
\emph{Hyperbolic Groups},
Essays in Group Theory, MSRI Publications, \textbf{8}(1987), 75-263,
Springer, New York.

\bibitem{Ha} P. de la Harpe,
\emph{Topics in Geometric Group Theory},
Chicago Lectures in Mathematics, The University of Chicago Press, 2000.

\bibitem{MP} E. Martinez-Pedroza,
\emph{Combination of Quasiconvex Subgroups of Relatively Hyperbolic Groups},
Groups Geom. Dyn., \textbf{3}(2009), 317–342.

\bibitem{Ol} A. Olshanskii,
\emph{Subnormal Subgroups in Free Groups, Their Growth And Cogrowth},
Math. Proc. Cambridge Phil. Soc., \textbf{163}(2017), 499-531.

\bibitem{Rips} E. Rips,
\emph{Subgroups of Small Cancellation Groups},
Bull. LMS, \textbf{14}(1982), 45-47.

\end{thebibliography}
\end{document}